\newtheorem{theorem}{Theorem}[section]
\newtheorem{lemma}[theorem]{Lemma}
\newtheorem{proposition}[theorem]{Proposition}
\newtheorem*{theorem*}{Theorem}
\theoremstyle{remark}
\newtheorem{example}[theorem]{Example}
\newtheorem{question}{Question}
\numberwithin{equation}{section}
\begin{document}
\title[Collections of Pairwise Intersecting Curves]{ Large Collections of Curves Pairwise Intersecting Exactly Once}

\author{Tarik Aougab}

\address{Department of Mathematics \\ Yale University \\ 10 Hillhouse Avenue, New Haven, CT 06510 \\ USA}
\email{tarik.aougab@yale.edu}

\date{\today}

\subjclass[2000]{46L55}

\keywords{combinatorial designs, surface topology}

\begin{abstract}

Let $\Omega=(\omega_{j})_{j\in I}$ be a collection of pairwise non-isotopic simple closed curves on the closed, orientable, genus $g$ surface $S_{g}$, such that $\omega_{i}$ and $\omega_{j}$ intersect exactly once for $i\neq j$.  It was recently demonstrated by Malestein, Rivin, and Theran that the cardinality of such a collection is no more than $2g+1$. In this paper, we show that for $g\geq 3$, there exists atleast two such collections with this maximum size up to the action of the mapping class group, answering a question posed by Malestein, Rivin and Theran. 
\end{abstract}

\maketitle

\section{Introduction}
In surface topology, it has long been of general interest to pose and study problems related to the existence and enumeration of collections of curves on a surface satisfying specified intersection properties. Specifically, the following question, popularized by Benson Farb and which goes back to work of Juvan, Malnic and Mohar \cite{Juv-Mal-Mo}, is an example of such a problem. In all that follows, let $S_{g}$ denote the closed orientable surface of genus $g$. \vspace{2 mm }

\begin{question} Let $\Gamma=(\gamma_{j})_{j\in I}$ be a collection of pairwise non-isotopic simple closed curves on $S_{g}$ indexed by an index set $I$ such that $\gamma_{i}$ intersects $\gamma_{j}$ no more than once for any $i \neq j$. What is the maximum cardinality of $I$? 

\end{question}

This turns out to be a very difficult question; at the time of the writing this paper, all that is so far known is that $I$ must be finite, and that there exists a quadratic lower bound and an exponential upper bound for the maximum size of $I$, the best of which are due to the work of Malestein, Rivin and Theran \cite{Mal-Riv-Ther} . Following the same terminology of their paper, we define a $k$-$\textit{system}$ to be a collection of simple closed curves on $S_{g}$ such that no two intersect more than $k$ times. 
 In the same article, these authors address the following simpler variant of Question 1: \vspace{3 mm}

\begin{question} Let $\Omega=(\omega_{j})_{j\in I}$ be a collection of pairwise non-isotopic simple closed curves indexed by an index set $I$ such that $\gamma_{i}$ intersects $\gamma_{j}$ $\textit{exactly}$ once for any $i \neq j$. What is the maximum cardinality of $I$? 

\end{question}

Given any $1$-system, we can construct its $\textit{intersection graph}$, which has a vertex for each curve in the collection, and an edge between any two curves that intersect. The $1$-systems which are the objects of study in Question 2 are precisely those $1$-systems whose intersection graphs are complete; therefore, we call such collections $\textit{complete 1-systems}$. Malestein, Rivin and Theran answer Question 2 by showing that any complete $1$-system can have no more than $2g+1$ members, and that this bound is sharp for all $g\geq 1$.  

\begin{example} (Genus $1$) Recall that two simple closed curves are isotopic on the torus (denoted by $S_{1}$) if and only if they are homologous; therefore the distinct isotopy classes of simple closed curves are parameterized by ordered pairs of coprime integers. Given two such classes $\omega_{1}=(a,b)$ and $\omega_{2}=(c,d)$ recall further that the $\textit{geometric intersection number}$ $i(\omega_{1},\omega_{2})\in \mathbb{N}$, defined as the minimum number of times that any curve in $\omega_{1}$ intersects a curve in $\omega_{2}$, is given by 
\[ i(\omega_{1},\omega_{2})=  \left| \det \left( \begin{array}{cc}
  																	a & c \\
  																	b & d \end{array} \right) \right| \]
Therefore by basic linear algebra, if we are given two curves $\omega_{1},\omega_{2}$ on $S_{1}$ which intersect once, there is exactly one curve up to isotopy intersecting both $\omega_{1}$ and $\omega_{2}$ exactly once. Hence any complete $1$-system on $S_{1}$ has no more than $3$ members.

\end{example}

By a $\textit{maximal complete 1-system}$, we mean a complete $1$-system on $S_{g}$ containing $2g+1$ members. Recall that the $\textit{mapping class group}$ of a surface $S$, denoted $\mbox{Mod}(S)$, is the group of isotopy classes of orientation preserving homeomorphisms from $S$ to itself (for more details, see \cite{Far-Mar}), and that $\mbox{Mod}(S_{1})$ is isomorphic to $SL(2,\mathbb{Z})$. Example 1.1 also demonstrates that on the torus, a maximal complete $1$-system is unique up to the action of $\mbox{Mod}(S_{1})$. That this is also the case for maximal complete $1$-systems on $S_{2}$ is addressed in \cite{Mal-Riv-Ther}. This motivates Malestein, Rivin, and Theran to pose the following question: 

\begin{question} For $g>2$, are maximal complete $1$-systems unique up to the action of the mapping class group $\mbox{Mod}(S_{g})$?
\end{question}

The main result of this paper answers this question in the negative; specifically, we prove:

\begin{theorem} For $g>3$, there exists at least $2$ distinct mapping class group orbits of maximal complete $1$-systems. 

\end{theorem}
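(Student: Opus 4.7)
The plan is to produce two maximal complete $1$-systems on $S_g$ and distinguish them using a quantity that is invariant under homeomorphism of the pair $(S_g, \bigcup_j \omega_j)$; since $\mathrm{Mod}(S_g)$ acts by homeomorphisms, any such invariant is constant on a single orbit. A first natural candidate is the combinatorial type of the $4$-valent cell decomposition of $S_g$ induced by the curves. An Euler-characteristic computation shows that a maximal complete $1$-system yields $V = \binom{2g+1}{2} = g(2g+1)$ vertices, $E = 2g(2g+1)$ edges, and $F = 2g^{2} - g + 2$ complementary faces, and the multiset recording how many of these faces are $k$-gons is orbit-invariant. A second candidate is the $(2g+1) \times (2g+1)$ skew-symmetric $\pm 1$-matrix of algebraic intersection numbers: because symplectic automorphisms of $H_1(S_g;\mathbb{Z})$ preserve this matrix, while orientation reversals and relabelings act only by simultaneous row/column sign flips and permutations, its equivalence class under these operations --- equivalently, the signed graph $K_{2g+1}$ up to vertex switching, recorded by the signs of its triangles --- is an orbit invariant.

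My approach would be first to fix a concrete ``reference'' maximal complete $1$-system $\Omega_{1}$, for example one arising from a chain-like construction on $S_g$ together with an appropriate closing curve, and to compute its face-vector or its triangle-sign pattern directly. I would then construct a second system $\Omega_{2}$ by excising a suitably chosen subsurface $\Sigma \subset S_g$ on which $\Omega_{1}$ restricts to a finite collection of arcs, and replacing those arcs with a different arc system having the same boundary data and the same pairwise intersection counts on $\Sigma$, but different interior combinatorics. Provided the surgery preserves the global conditions, $\Omega_{2}$ is a complete $1$-system of the same (maximal) size $2g+1$; and if the surgery is chosen so as to change a face-type or flip the sign of a triangle, then $\Omega_{1}$ and $\Omega_{2}$ must lie in distinct $\mathrm{Mod}(S_g)$-orbits.

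The main obstacle is the second step. The bound $2g+1$ is extremal, so any local modification of $\Omega_{1}$ tends to either collapse two curves into the same isotopy class or introduce a new intersection, destroying the $1$-system property. The heart of the argument is therefore an explicit combinatorial picture on $\Sigma$, verifying simultaneously that the new arcs intersect each other and each old curve exactly once, that the resulting closed curves remain pairwise non-isotopic, and that the chosen invariant really does change under the swap. The requirement $g>3$ should correspond to needing a subsurface $\Sigma$ large enough to admit a genuinely distinct local arc system, which in turn explains why the analogous substitution is unavailable in the low-genus cases already treated in \cite{Mal-Riv-Ther}.
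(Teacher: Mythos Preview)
Your proposal is a strategy outline, not a proof: the ``heart of the argument,'' in your own words, is an explicit combinatorial picture on a subsurface $\Sigma$ together with a verification that the chosen invariant changes --- and none of that is supplied. You name two candidate invariants and a surgery scheme, but you do not construct a single concrete second system, you do not compute either invariant on even one example, and you do not check that the surgery can be carried out without creating extra intersections or isotopic pairs. Everything that would constitute the proof is deferred. There is also a technical issue with your first invariant: the face-vector of the induced cell decomposition is attached to a \emph{realization} of the curves, not to their isotopy classes, and for three or more curves in pairwise minimal position the realization is not unique (one can slide a curve across a double point of two others). You would need to argue that the multiset of face sizes is unchanged under such triangle moves before using it as an orbit invariant; this is plausible but not automatic.

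The paper takes a quite different route. Its invariant is whether the system can be isotoped so that \emph{all} intersections occur at a single point; the standard system $X(g)$ (diameters of a $(4g+2)$-gon with opposite sides identified) has this property, and Proposition~2.1 shows it characterizes $X(g)$ up to $\mathrm{Mod}(S_g)$. The second system is built explicitly on $S_3$ from a specific $20$-gon gluing: three of its curves are shown to generate a rank-$4$ subgroup of $\pi_1(S_3)$ via an explicit homology computation, which is impossible if their union were isotopic to a wedge of three circles, so the intersections cannot be made to coincide. These three curves are then completed to a maximal system $\Omega(3)$ by exhibiting four more curves in the polygon. The passage to higher genus is not by local surgery on a subsurface but by finding a simple arc $\gamma$ meeting every curve of $\Omega(g)$ once, attaching a handle near its endpoints, and closing up two parallel copies of $\gamma$ through the new handle (one Dehn-twisted) to obtain two additional curves; this yields $\Omega(g+1)$ inductively while preserving the rank obstruction. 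If you want to salvage your approach, you will at minimum need to produce the explicit second system and actually compute one of your invariants on both sides.
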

The proof of Theorem 1.2 is completed by explicitly constructing two distinct orbits on $S_{3}$, and then attaching handles in a prescribed way to extend both orbits to higher genus surfaces. The construction for $S_{3}$ makes use of a particular gluing pattern on a $20$-gon; when the edges are glued together via the prescribed pattern, a genus $3$ surface is obtained and the edges of the $20$-gon project down to a pair of simple closed curves intersecting $5$ times. 

In general, a collection of curves $\Gamma=\left\{\gamma_{i}\right\}_{i\in I}$ on $S_{g}$ is said to $\textit{fill}$ $S_{g}$ if $S_{g}\setminus \Gamma$ is a disjoint union of topological disks. It is well known that for any $g$ there exists a pair of simple closed curves which fill. Such filling pairs have been studied extensively; for instance, one of the first examples of pseudo-Anosov mapping classes came from considering compositions of Dehn twists around filling pairs (see \cite{Far-Mar}). 

One is then led to ask the following simple question:

\begin{question}
What is the minimum number of times a pair of simple closed curves on $S_{g}$ must intersect in order to fill $S_{g}$? 
\end{question}

In a forthcoming paper, (joint with Shinnyih Huang), we prove \cite{A.-Huang}

\begin{theorem}  Let $\left\{\alpha,\beta\right\}$ be a pair of simple closed curves which fills $S_{g}$. Then $i(\alpha,\beta)\geq 2g-1$. Furthermore, this bound is sharp for $g\neq 2$. If $g=2$, then $i(\alpha,\beta)\geq 4$. 

\end{theorem}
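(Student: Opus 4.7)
The lower bound $i(\alpha,\beta)\geq 2g-1$ follows from a standard Euler characteristic computation. Given a filling pair intersecting minimally in $n=i(\alpha,\beta)$ points, put a CW structure on $S_g$ whose vertices are the $n$ intersection points, whose edges are the arcs of $\alpha\cup\beta$ between consecutive intersection points, and whose $2$-cells are the complementary regions. Since each of $\alpha$ and $\beta$ is a simple closed curve cut into exactly $n$ arcs, $V=n$ and $E=2n$. By the filling assumption every $2$-cell is a disk, so $n-2n+F=2-2g$ and hence $F=n-2g+2$. Since $F\geq 1$ we conclude $n\geq 2g-1$.

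To establish sharpness for $g\neq 2$, I would construct explicit filling pairs realizing $n=2g-1$. When equality holds we have $F=1$, so the complement of $\alpha\cup\beta$ is a single open disk whose boundary is a $(4(2g-1))$-gon whose sides are labeled alternately by $\alpha$- and $\beta$-arcs. The surface is then reconstructed by specifying how these sides are identified in pairs, and this is the combinatorial datum to exhibit. For $g=1$ the meridian and longitude of the torus suffice. For $g\geq 3$, in the spirit of the $20$-gon gluing used earlier in this paper for a different purpose, one writes down a specific gluing pattern and verifies directly that the two resulting curves are embedded and that the quotient is orientable of genus $g$. The cleanest route is probably to fix an explicit base pattern at $g=3$ (whose output is a pair of curves meeting $5$ times and filling $S_3$) and then perform an inductive handle attachment that adds exactly two intersections to the pair and one to the genus, preserving the single-disk complement at each stage.

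The main obstacle is the genus-$2$ case. The Euler characteristic bound only gives $n\geq 3$, and the stronger assertion $n\geq 4$ requires a finite combinatorial analysis. Assuming for contradiction that a filling pair on $S_2$ achieves $n=3$, again $F=1$ and the complement is a single $12$-gon with six alternating $\alpha$-slots and six $\beta$-slots; any gluing partitions the $\alpha$-slots into three pairs and likewise for the $\beta$-slots. Modding out by the dihedral symmetry of the $12$-gon and by relabeling of the arcs, this reduces to a small enumeration. For each surviving candidate pattern one checks (i) that the quotient is a closed orientable surface of Euler characteristic $-2$, and (ii) that neither $\alpha$ nor $\beta$ develops a self-intersection and that no bigon appears between them; the expected outcome is that every candidate fails at least one test, ruling out $n=3$. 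The delicate step, which I expect to be the main difficulty, is organizing this enumeration so that no case is overlooked. Once $n=3$ is excluded on $S_2$, one finishes by exhibiting an explicit $16$-gon gluing which produces a filling pair on $S_2$ with $i(\alpha,\beta)=4$, establishing sharpness of the improved bound.
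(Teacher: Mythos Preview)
Your Euler characteristic computation for the lower bound is exactly what the paper does: with $V=i(\alpha,\beta)$, $E=2i(\alpha,\beta)$, and $D$ the number of complementary disks, one gets $i(\alpha,\beta)=2g-2+D\geq 2g-1$. So for the part the paper actually proves, your proposal matches it essentially verbatim.

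Be aware, however, that the paper does \emph{not} prove the remaining assertions of the theorem. It explicitly states that the sharpness construction ``is not directly relevant to this paper'' and defers both sharpness for $g\neq 2$ and the improved bound $i(\alpha,\beta)\geq 4$ for $g=2$ to a separate forthcoming paper. Thus your outline for those parts is not comparable to anything here; it goes beyond what this paper establishes. Your sketch is a reasonable strategy (single-disk complement $\Leftrightarrow$ $(8g-4)$-gon gluing, explicit base case plus inductive handle attachment, and a finite enumeration of $12$-gon patterns for the $g=2$ obstruction), but since the paper contains no argument for these claims, there is nothing to check your plan against within this text.
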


The sharpness of this bound involves a combinatorial construction which is not directly relevant to this paper. However, we include a proof of the lower bound because it is elementary and it establishes an important correspondence between $i(\alpha,\beta)$ and the number of disks in the complement $S_{g}\setminus (\alpha\cup \beta)$.  \vspace{2 mm}

$\textit{proof of lower bound}$: If $\left\{\alpha,\beta\right\}$ fill, then we can take $\alpha\cup \beta$ to be a $1$-skeleton of a cell decomposition of $S_{g}$, where the vertices are the intersections and the edges are the arcs of $\alpha$ and $\beta$ which run between intersection points. We remark that there are twice as many edges as there are vertices. Letting $D$ denote the number of disks in $S_{g}\setminus (\alpha\cup\beta)$, 
\[ \chi(S_{g})=2-2g= i(\alpha,\beta)-2i(\alpha,\beta)+D\]
\[\Rightarrow 2-2g=D-i(\alpha,\beta)\Rightarrow i(\alpha,\beta)=2g-2+D\]
Since $D\geq 1$, it follows that $i(\alpha,\beta)\geq 2g-1$. $\Box$ \vspace{2 mm}

Therefore, a simple closed filling pair having a minimal number of intersections is equivalent to its complement being connected. Thus cutting along such a pair of curves produces a single polygon with $8g-4$ sides. One expects that a minimally intersecting filling pair should `barely' fill the surface, in that there are many simple closed curves intersecting the pair only once; this was the motivation for considering the gluing patterns associated to minimally intersecting filling pairs to study large complete $1$-systems.

The organization of the paper is as follows. In section $2$, we demonstrate the existence of a complete maximal $1$-system on $S_{g}$ and we provide a necessary condition for any other maximal complete $1$-system to be equivalent to this one. In section $3$, we complete the construction for $S_{3}$, and in section $4$, we extend this construction to all higher genera. 

The author would like to thank Yair Minsky, W. Patrick Hooper, and Ian Biringer for numerous helpful conversations. 

\section{Existence of Maximal Complete $1$-Systems}
In this section we briefly demonstrate that there exists a complete $1$-system with $2g+1$ members on $S_{g}$ for all $g\geq 1$. Indeed, $S_{g}$ can be obtained by identifying the opposite sides of a $4g+2$-gon $P$. Then the arcs connecting opposite sides of $P$ project down to simple closed curves, any two of which intersect exactly once.

\begin{figure}[H]
\caption{ A Maximal Complete $1$-system on a genus $2$ surface}
\centering
	\includegraphics[width=4in]{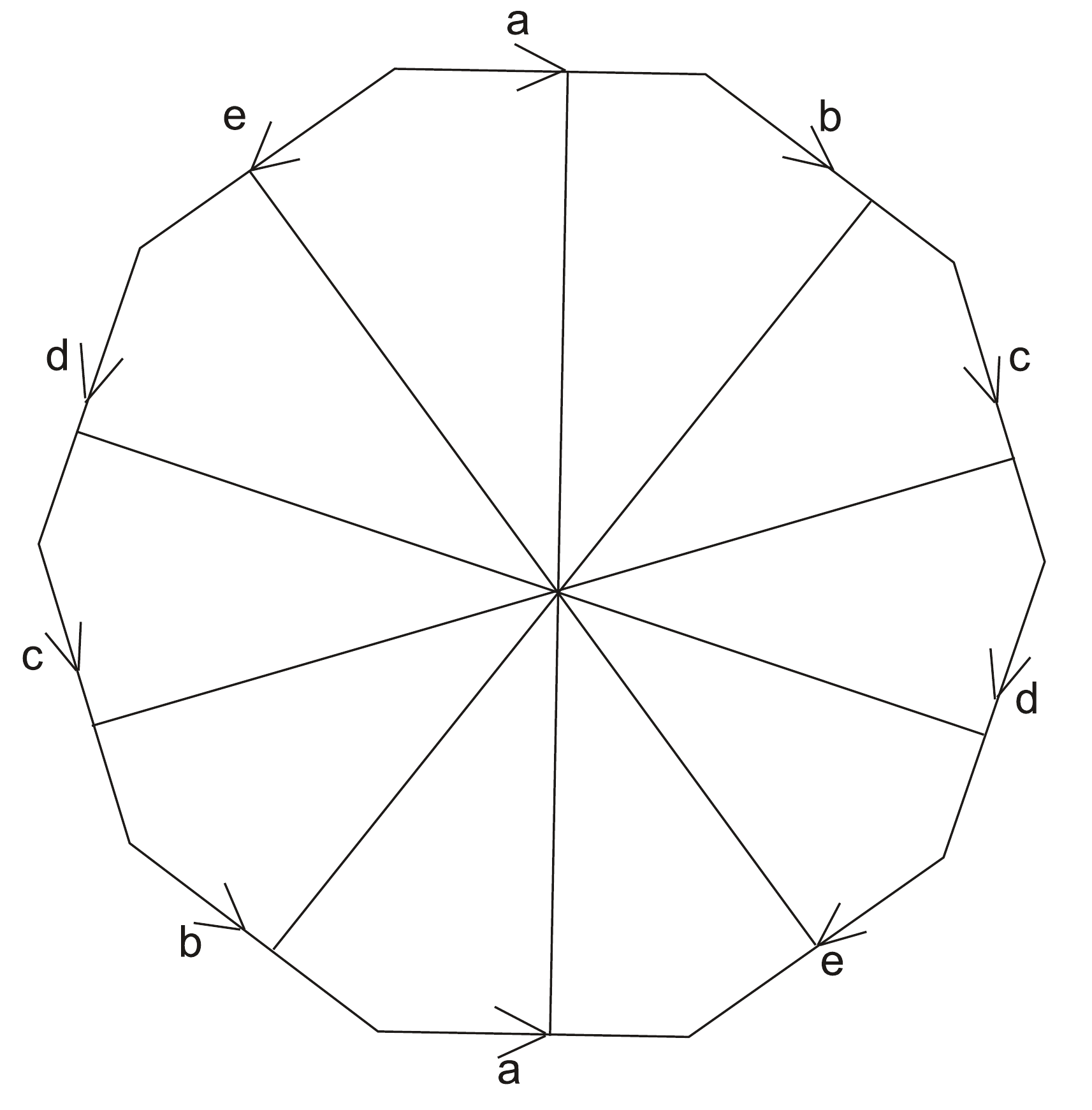}
\end{figure}

Denote this complete maximal $1$-system by $X(g)$ (so that $X(2)$ is pictured above). By construction, it is clear that all intersections in $X(g)$ can be taken to occur at the same point. This property characterizes a class of maximal complete $1$-systems, as the following proposition makes precise. 

\begin{proposition} Let $\Omega$ be a complete maximal $1$-system on $S_{g}$ having the property that up to isotopy, all intersections occur at the same point. Then $\Omega$ and $X(g)$ are equivalent modulo the action of $\mbox{Mod}(S_{g})$.
\end{proposition}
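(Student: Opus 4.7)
The plan is to exploit the single-point assumption to reduce the problem to a combinatorial uniqueness statement about the cyclic ordering of arc-ends at $p$, and then invoke the classical fact that a cellular embedding of a graph into a closed oriented surface is determined, up to orientation-preserving homeomorphism, by its rotation system at each vertex.

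First I would record the local picture at $p$. Since every pair of curves in $\Omega$ meets at $p$ exactly once, every $\omega_i$ passes through $p$, and after isotopy all $\binom{2g+1}{2}$ pairwise crossings at $p$ may be taken to be transverse. The $2g+1$ curves then contribute $4g+2$ arc-ends at $p$, which the orientation of $S_g$ arranges in a cyclic order. Labeling each arc-end by the index of its curve produces a cyclic word $w$ of length $4g+2$ over $\{1,\ldots,2g+1\}$ in which each letter appears exactly twice.

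The combinatorial heart of the argument is to show that $w$ is determined up to rotation and relabeling. Transversality forces the arc-ends of any two curves $\omega_i,\omega_j$ to alternate around $p$, so every pair of letters in $w$ is interleaved. If letter $i$ sits at cyclic positions $a$ and $b$, interleaving with each of the remaining $2g$ letters forces each of the two arcs of $w$ cut off by the two $i$'s to contain exactly one occurrence of every other letter; hence each such arc has length $2g$, so the two occurrences of $i$ are cyclically antipodal. Sequentially naming the letters encountered from position $1$ onward, one concludes $w=(1,2,\ldots,2g+1,1,2,\ldots,2g+1)$ up to rotation and relabeling---precisely the rotation system of $X(g)$ at the image of the center of the defining $(4g+2)$-gon.

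To finish, view $\Omega$ as a connected graph in $S_g$ with the single vertex $p$ and $2g+1$ loops, and note that the above rotation system determines the associated ribbon graph neighborhood $N(\Omega)$ up to homeomorphism. A short calculation (the orbits of $h_i\mapsto h_{i+2g+2}$ modulo $4g+2$ under the composition of the edge involution with the cyclic successor) shows this neighborhood to have two boundary circles and genus $g$, so $\chi(S_g\setminus\mathrm{int}(N(\Omega)))=2$ forces the complement to be two open disks. Hence $\Omega$ and $X(g)$ realize cellular embeddings of the same abstract graph into the oriented surface $S_g$ with identical rotation systems, and the standard classification of such embeddings provides an orientation-preserving homeomorphism $f\colon S_g\to S_g$ with $f(\Omega)=X(g)$, the required element of $\mbox{Mod}(S_g)$. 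I expect the main obstacle to be the combinatorial rigidity step for $w$; the rest is an Euler characteristic computation and an appeal to the standard theorem on rotation systems.
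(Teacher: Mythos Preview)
Your argument is correct, and in fact it makes explicit a point the paper leaves largely to a figure: the rigidity of the cyclic word $w$. Both proofs regard $\Omega$ as a one-vertex, $(2g+1)$-loop graph on $S_g$, and both use an Euler characteristic count to see that the complement consists of two disks. The paper, however, takes a different route to the final homeomorphism. Rather than invoking the classification of cellular embeddings by rotation systems, it builds the \emph{dual} picture: it chooses points $a,b$ in the two complementary disks, finds arcs $\gamma_j$ from $a$ to $b$ with $i(\gamma_j,\omega_i)=\delta^j_i$, and observes that cutting $S_g$ along $\gamma_1,\ldots,\gamma_{2g+1}$ produces a $(4g+2)$-gon in which the $\omega_j$ lift to diagonals joining opposite edges --- exactly the defining polygon for $X(g)$.

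What each approach buys: your rotation-system argument is cleaner and more self-contained on the combinatorial side, since you actually prove that interleaving forces the antipodal pattern $(1,\ldots,2g{+}1,1,\ldots,2g{+}1)$, whereas the paper asserts the two-component complement by appeal to a picture. On the other hand, the paper's dual-cell construction is more explicit and avoids quoting the general theorem on ribbon graphs; it hands you the identifying polygon directly, which meshes nicely with how $X(g)$ was defined in the first place.
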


\begin{proof} Up to ambient isotopy, thinking of $\Omega= \left\{\omega_{1},...,\omega_{2g+1}\right\}$ as a graph on $S_{g}$, $\Omega$ has one vertex and $2g+1$ edges. Denoting by $N(\Omega)$ a small regular neighborhood of $\Omega$, one can trace around $\partial N(\Omega)$ to discover that $S_{g}\setminus \Omega$ has exactly two connected components, which we denote by $A$ and $B$ (see Figure $2$ below).

\begin{figure}[H]
\caption{The local picture at the only vertex of $\Omega(2)$.  The complement of $\Omega(g)$ in $S_{g}$ has two connected components, and each curve in $\Omega(g)$ is on the boundary of both $A$ and $B$.}
\centering
\includegraphics[width=4in]{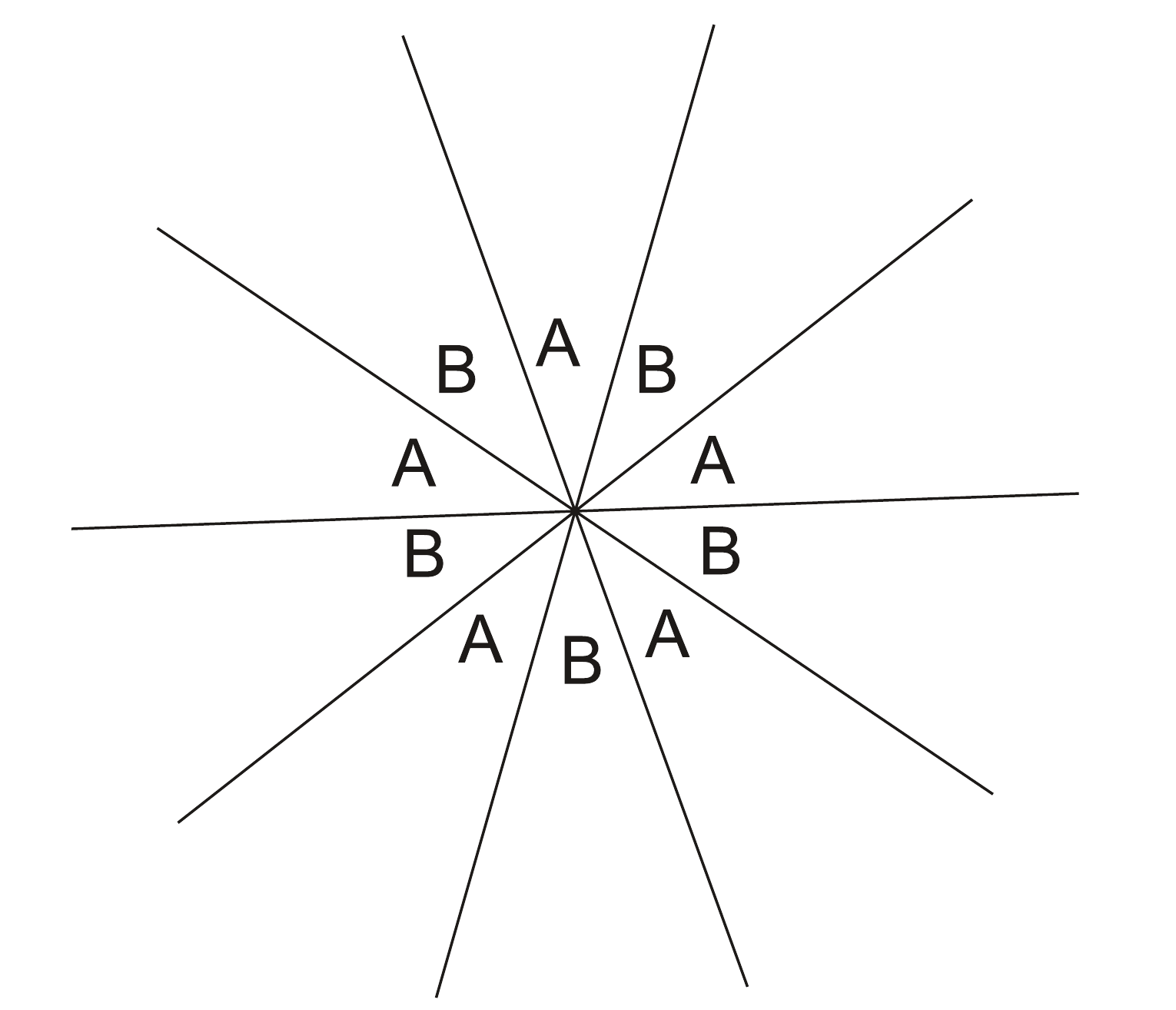}
\end{figure}

We claim that $A$ and $B$ are both disks; indeed, 
\[ \chi(S_{g})=2-2g=\chi(\Omega)+\chi(A)+\chi(B)\]
\[=-2g+ \chi(A)+\chi(B)\Rightarrow \chi(A)=\chi(B)=1 \]
 Pick points $a\in A$ and $b\in B$; then for each $j$ there exists an arc $\gamma_{j}$ such that $i(\gamma_{j},\omega_{i})=\delta_{i}^{j}$, and $\gamma_{j}$ has endpoints $a$ and $b$. 
Letting $x$ denote the intersection point of $\Omega$. $\left\{x,\Omega,\left\{A,B\right\}\right\}$ is a cellular decomposition of $S_{g}$, and $\left\{\left\{a,b\right\},\left\{\gamma_{1},...,\gamma_{2g+1}\right\}\right\}$ can be extended to a dual decomposition with one $2$-cell containing $x$. Thus cutting along $\gamma_{1},...,\gamma_{2g+1}$ yields a polygon with $4g+2$ edges, and $\omega_{1},...,\omega_{2g+1}$ necessarily lift to line segments connecting opposite sides.

\end{proof}
 \vspace{100 mm}
\section{Genus $3$ Case}
Pictured below is a gluing pattern on a $20$-gon which yields the closed orientable genus $3$ surface $S_{3}$. 

\begin{figure}[H]
\centering
	\includegraphics[width=4in]{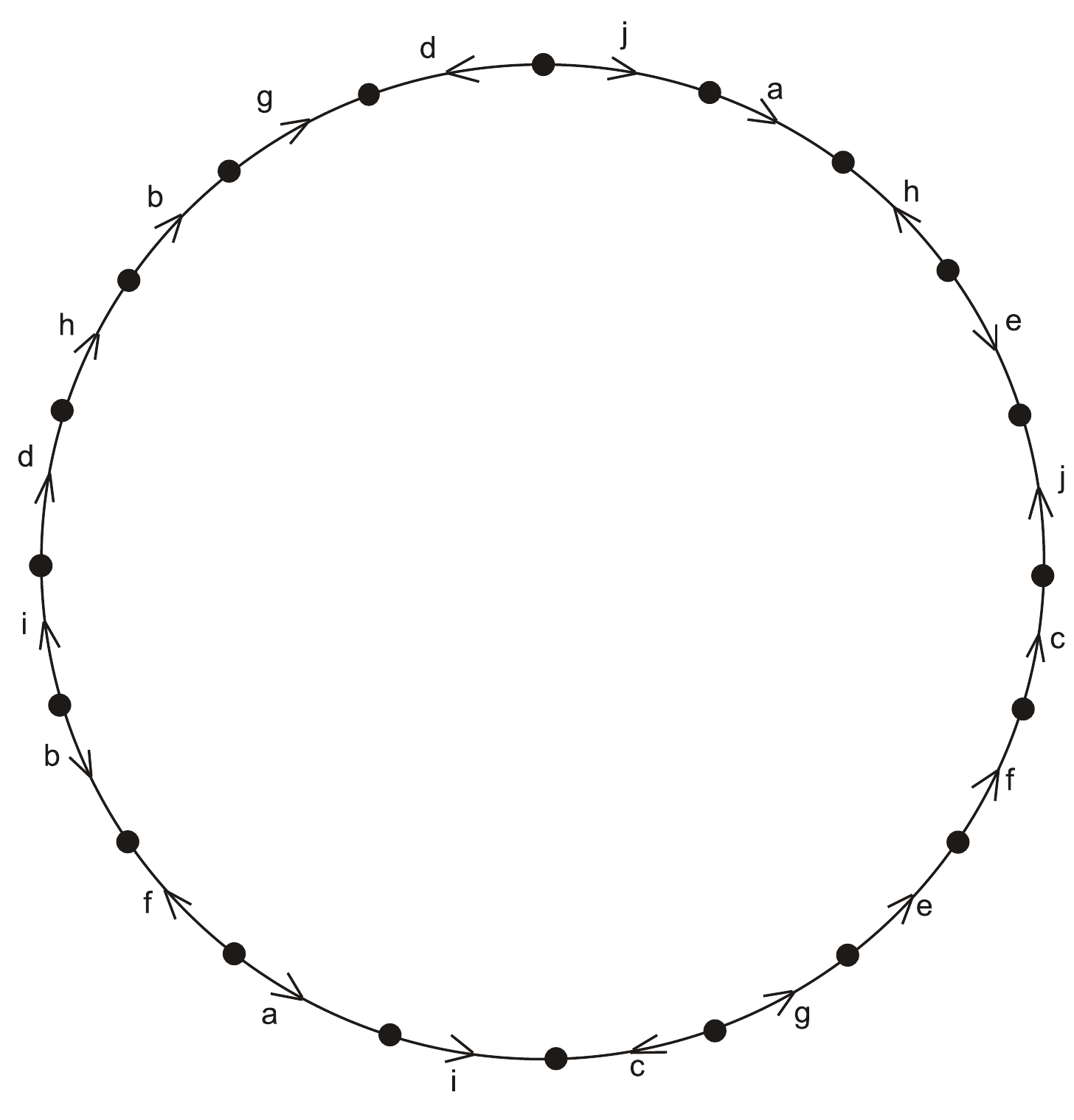}
	\caption{A Gluing Pattern on a $20$-gon which produces a genus $3$ surface. The edges alternate between projecting down to one of two simple closed curves.}

\end{figure}

The significance of the lettering is that edges a,b,c,d,e project down to arcs of one simple closed curve $\alpha$ in that order, and edges f,g,h,i,j project down to arcs of another simple closed curve $\beta$ in that order; $\alpha$ and $\beta$ intersect $5$ times. Note that there are $5$ distinct equivalence classes of vertices, each containing $4$ vertices of the above polygon. These $5$ classes of vertices correspond to the $5$ intersections between $\alpha$ and $\beta$. We will use this gluing pattern to construct a maximal complete $1$-system $\Omega(3)$ which is distinct from $X(3)$, modulo the action of $\mbox{Mod}(S_{3})$.

 \vspace{150 mm}

We begin the construction by considering the three curves pictured below- one in red, one blue, and the third black. The three curves pairwise intersect exactly once and thus form a complete $1$-system. Note that attempting to isotope the curves so that all three intersections occur at the same place may not, a priori, keep all three curves simple. Indeed, we claim that these three curves cannot be isotoped to make all three intersections coincide.

\begin{lemma} The three curves pictured below cannot be isotoped so as to make all three pairwise intersections coincide. 
\end{lemma}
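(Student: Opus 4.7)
I would argue by contradiction. Suppose the three curves $\alpha, \beta, \gamma$ can be isotoped to a configuration in which all three pairwise intersections occur at a single common point $x \in S_3$. The plan is to compute the topological type of a regular neighborhood of the union in this hypothetical configuration, and then exhibit an incompatibility with the actual configuration drawn in the figure.

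In the hypothetical coincident configuration, the union $\Gamma = \alpha \cup \beta \cup \gamma$ is an embedded graph in $S_3$ with a single vertex $x$ of valence $6$ and three loop-edges, one for each curve. Because each pair of curves must cross transversely at $x$ (forced by the pairwise intersection number being exactly $1$), the cyclic order of the six half-edges at $x$ is pinned down, up to rotation and reflection, as the unique ``triple-alternating'' pattern $(a_1, b_1, c_1, a_2, b_2, c_2)$ in which the two half-edges of each curve separate the two half-edges of each other curve.

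From this rigid combinatorial data I compute the topology of a regular neighborhood $N(\Gamma) \subset S_3$. Tracing boundary components via the face permutation $\sigma \circ \tau$, where $\sigma$ cyclically shifts half-edges at $x$ and $\tau$ swaps paired half-edges, produces exactly two cycles of length three, so $N(\Gamma)$ has exactly two boundary circles. Combined with $\chi(N(\Gamma)) = \chi(\Gamma) = 1-3 = -2$, this forces $N(\Gamma)$ to be a genus-$1$ surface with two boundary components (a twice-punctured torus). Consequently, in any coincident realization the three curves would all lie in a twice-punctured-torus subsurface $\Sigma \subset S_3$, and the complement $S_3 \setminus \Sigma$ would be a surface with Euler characteristic $-2$ whose boundary consists of two circles.

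To complete the proof, I need to verify directly from the figure that the three curves $\alpha, \beta, \gamma$ do not admit isotopy representatives contained in any embedded twice-punctured torus in $S_3$. A natural approach is to read off the connected-component structure and Euler characteristics of $S_3 \setminus (\alpha \cup \beta \cup \gamma)$ in the picture and show that these data are inconsistent with the complement produced by any embedding of a twice-punctured torus in $S_3$ (the possibilities for which are few: either a connected genus-$1$ surface with two boundary circles, or a disjoint union such as a once-punctured genus-$2$ surface together with a disk, or two once-punctured tori). Alternatively, one can search for a homological obstruction, showing that the classes $[\alpha], [\beta], [\gamma] \in H_1(S_3)$ cannot all lie in the image of $H_1(\Sigma) \to H_1(S_3)$ for any embedded twice-punctured torus $\Sigma$. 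This final picture-dependent verification is the principal obstacle.
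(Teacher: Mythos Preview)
Your framework is sound but stops exactly where the real work begins. Computing that the hypothetical coincident configuration has regular neighborhood a twice-punctured torus is correct and easy; the hard part is your admitted ``principal obstacle,'' and you have not supplied any argument for it. Neither of your two suggested strategies is carried out: reading off the complement of the union from the picture and matching it against the finite list of possible complements of an embedded twice-punctured torus would require bookkeeping you have not done, and the bare homological remark that $[\alpha],[\beta],[\gamma]$ would lie in the image of $H_1(\Sigma)\to H_1(S_3)$ for some twice-punctured torus $\Sigma$ is not by itself an obstruction, since three classes fitting inside a rank-$\leq 3$ image is exactly what one would expect. So as written the proposal is a plan, not a proof.

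The paper proceeds differently and never analyzes the hypothetical configuration at all. It works with the \emph{actual} regular neighborhood $\Sigma$ of the three curves as drawn (here the union is a graph with three vertices and six edges, so $\chi(\Sigma)=-3$ and $\Sigma$ is a thrice-punctured torus). The paper then exhibits four explicit simple closed curves inside $\Sigma$ (two of them being the red and blue curves from the original triple), writes down a presentation of $\pi_1(S_3)$ coming from the $20$-gon gluing, and computes the images of those four curves in $H_1(S_3;\Z)\cong\Z^6$. The four resulting vectors are linearly independent, so $i_\ast(\pi_1(\Sigma))$ has rank at least $4$. Since a coincident realization would force $\Sigma$ to be (up to isotopy of $S_3$) a neighborhood of a wedge of three circles, with $\pi_1$-image of rank at most $3$, this is a contradiction. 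An alternative route mentioned in the paper observes directly that $S_3\setminus\Sigma$ is connected with three boundary circles and uses the long exact sequence of the pair $(S_3,\Sigma)$ to see that $H_1(\Sigma)$ injects into $H_1(S_3)$.

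The upshot: your homological instinct is the right one, but the paper converts it into an actual computation by producing four concrete curves in the \emph{given} neighborhood and reading off their $H_1$-coordinates from the polygon. That explicit rank-$4$ calculation, tied to the specific picture, is precisely the missing ingredient in your proposal.
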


\begin{figure}[H]
\centering
	\includegraphics[width=4 in]{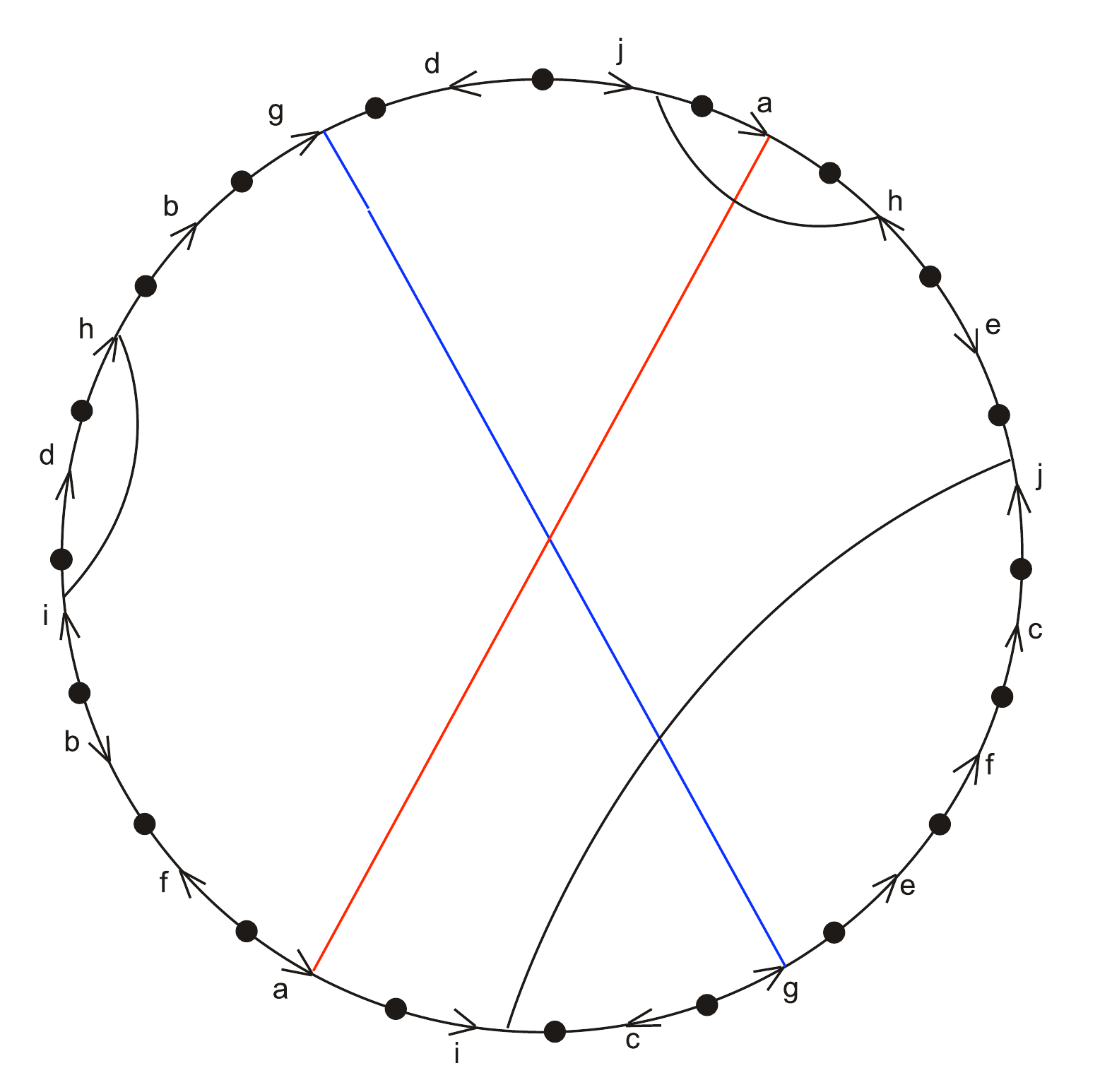}
	\caption{Three curves forming a complete $1$-system. Attempting to isotope the curves so that all intersections occur at the same point may result in one of the curves having self intersection. }
\end{figure}

\begin{proof}
Let $\Sigma\subset S_{3}$ be the subsurface consisting of a regular neighborhood of the union of the three curves. Consider the four curves pictured below in Figure $5$; the red and blue curves are the curves of the same colors in Figure $4$; let $\Gamma< \pi_{1}(\Sigma)$ be the subgroup of the fundamental group of $\Sigma$ generated by these four curves. Then we will show that $i_{\ast}(\Gamma)$ is a rank $4$ subgroup of $\pi_{1}(S_{3})$, where $i_{\ast}:\pi_{1}(\Sigma)\rightarrow \pi_{1}(S_{3})$ is the homomorphism induced by the inclusion $i:\Sigma \hookrightarrow S_{3}$.  This will imply the conclusion of Lemma $3.1$, for if the three curves in figure $4$ could be isotoped so that all three intersections occur at the same place, it would imply that there is an isotopy of $S_{3}$ sending $\Sigma$ to a regular neighborhood of a bouquet of three circles. The rank of $i_{\ast}(\pi_{1}(\Sigma))$ is unaffected by this isotopy, and therefore if such an isotopy exists, $\mbox{rank}(i_{\ast}(\pi_{1}(\Sigma)))\leq 3$.

\begin{figure}[H]
\centering
	\includegraphics[width=3.5in]{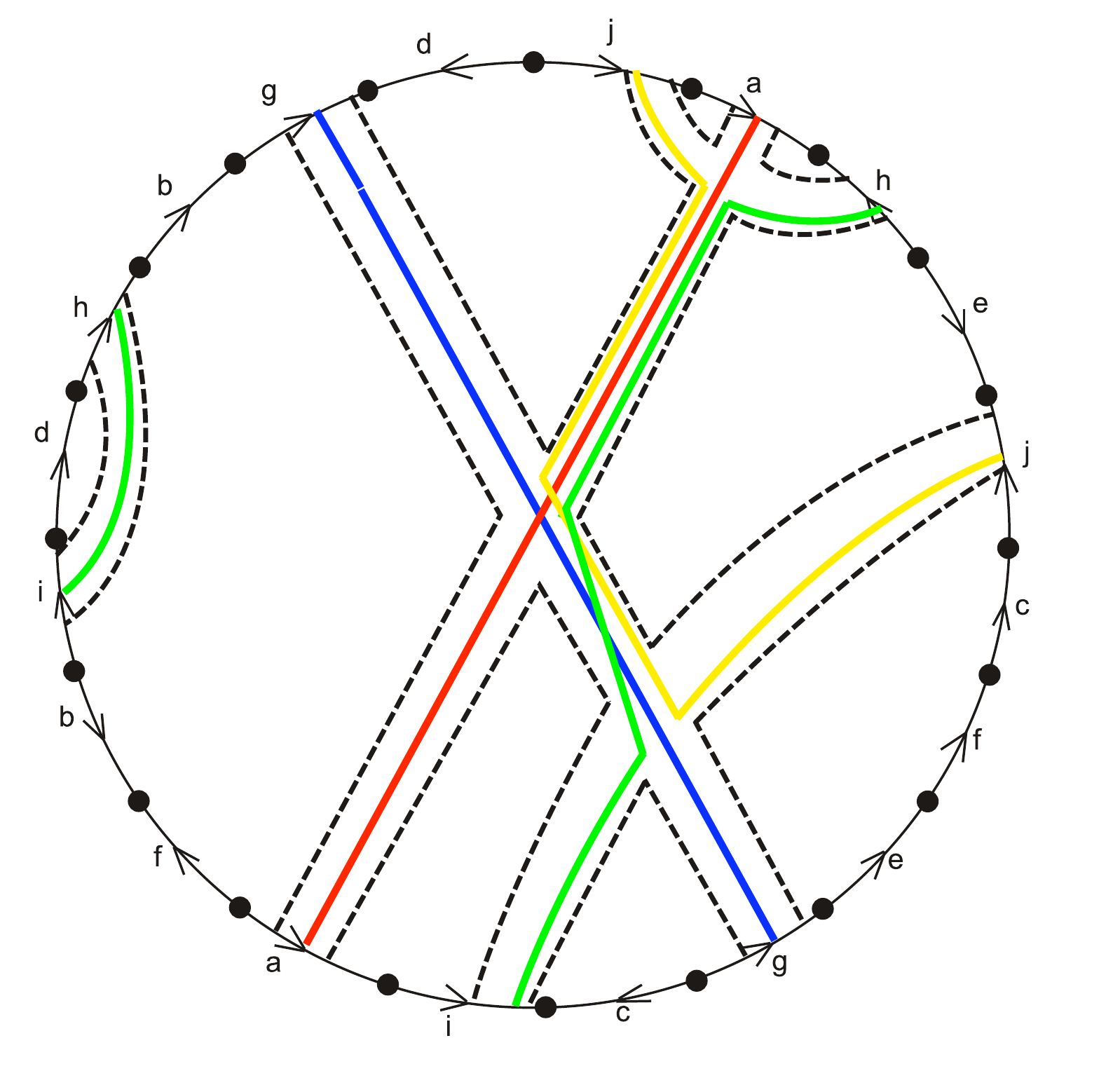}
	\caption{Four simple closed curves in $\pi_{1}(\Sigma)$ colored red, blue, yellow and green. The boundary of $\Sigma$ is drawn in dashed lines. }
\end{figure}

The path consisting of the concatenated edges b,c,d,e comprise a spanning tree for $\alpha\cup\beta$; thus we can contract them to obtain the following presentation for $\pi_{1}(S_{3})$:

\[ \pi_{1}(S_{3})= \left\langle a,h,i,j,g,f | ah^{-1}j^{-1}f^{-1}g^{-1}i^{-1}a^{-1}fihgj=1 \right\rangle \]

We abelianize $\pi_{1}(S_{3})$ to obtain $\mathbb{Z}^{6}$ by identifying the $n^{th}$ generator in the above presentation with the $n^{th}$ row vector of the $6\times6$ identity matrix; note that this is a valid abelianization because each generator and its inverse appear exactly once in the relator for the above presentation. Then we can read off the images of the four curves pictured under the abelianization map; in the order of red, blue, yellow, green, the images are 

\[ (0,-1,-1,-1,-1,-1); (1,-1,0,0,0,-1); (1,-1,0,0,0,0); (0,0,0,-1,-1,-1) \]

These four vectors are linearly independent over $\mathbb{R}$ and therefore also over $\mathbb{Z}$. Hence $i_{\ast}(\pi_{1}(\Gamma))$ has rank $4$. 
\end{proof}

Alternatively, as was pointed out by Ian Biringer, this conclusion can be reached by observing that $\chi(\Sigma)=-3$, $\partial \Sigma$ has three connected components, and $S_{3}\setminus \Sigma$ is connected. Thus $\Sigma$ is a thrice punctured torus, and by examining the long exact sequence for homology of the pair $(S_{3},\Sigma)$ one finds that $H_{1}(\Sigma)$ injects into $H_{1}(S_{3})$. 

To complete the construction of $\Omega(3)$, it suffices to extend the three curves pictured in Figure $4$ to a maximal complete $1$-system by adding $4$ more curves. Such a maximal complete $1$-system would have the property that it cannot be isotoped so as to make all of the $21$ intersections coincide, and therefore can not be equivalent to $X(3)$ modulo the action of $\mbox{Mod}(S_{3})$. Indeed, such a complete $1$-system exists and it is pictured below in Figure $6$; the three curves of Figure $4$ are colored according to their colors in Figure $4$. 

\begin{figure}[H]
\centering
	\includegraphics[width=3.5in]{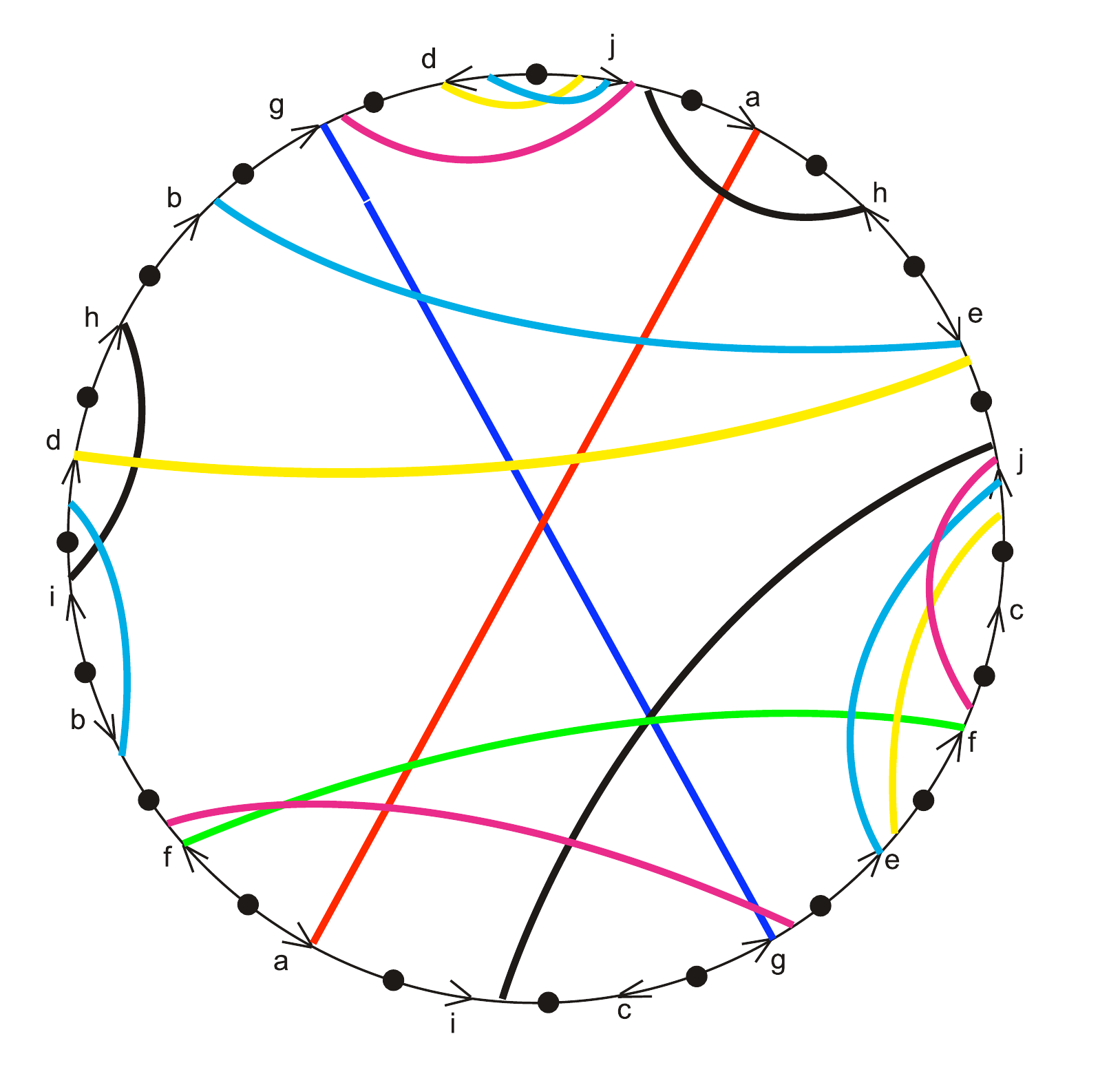}
\caption{A maximal complete $1$-system containing the three curves depicted in Figure $4$ as a sub-complete $1$-system.}
\end{figure}

Note that no two of the above curves are isotopic because each intersects the other exactly once transversely; for the same reason no curve pictured above can be nulhomotopic (or even nulhomologous). This completes the construction of $\Omega(3)$, and proves Theorem 1.2 in the case $g=3$.  
\section{Higher Genus}

To construct $\Omega(4)$ we make the following observation: there exists a simple arc $\gamma(3)$ which intersects each member of $\Omega(3)$ exactly once transversely (see Figure $7$). 

\begin{figure}[H]
\centering
	\includegraphics[width=3.5in]{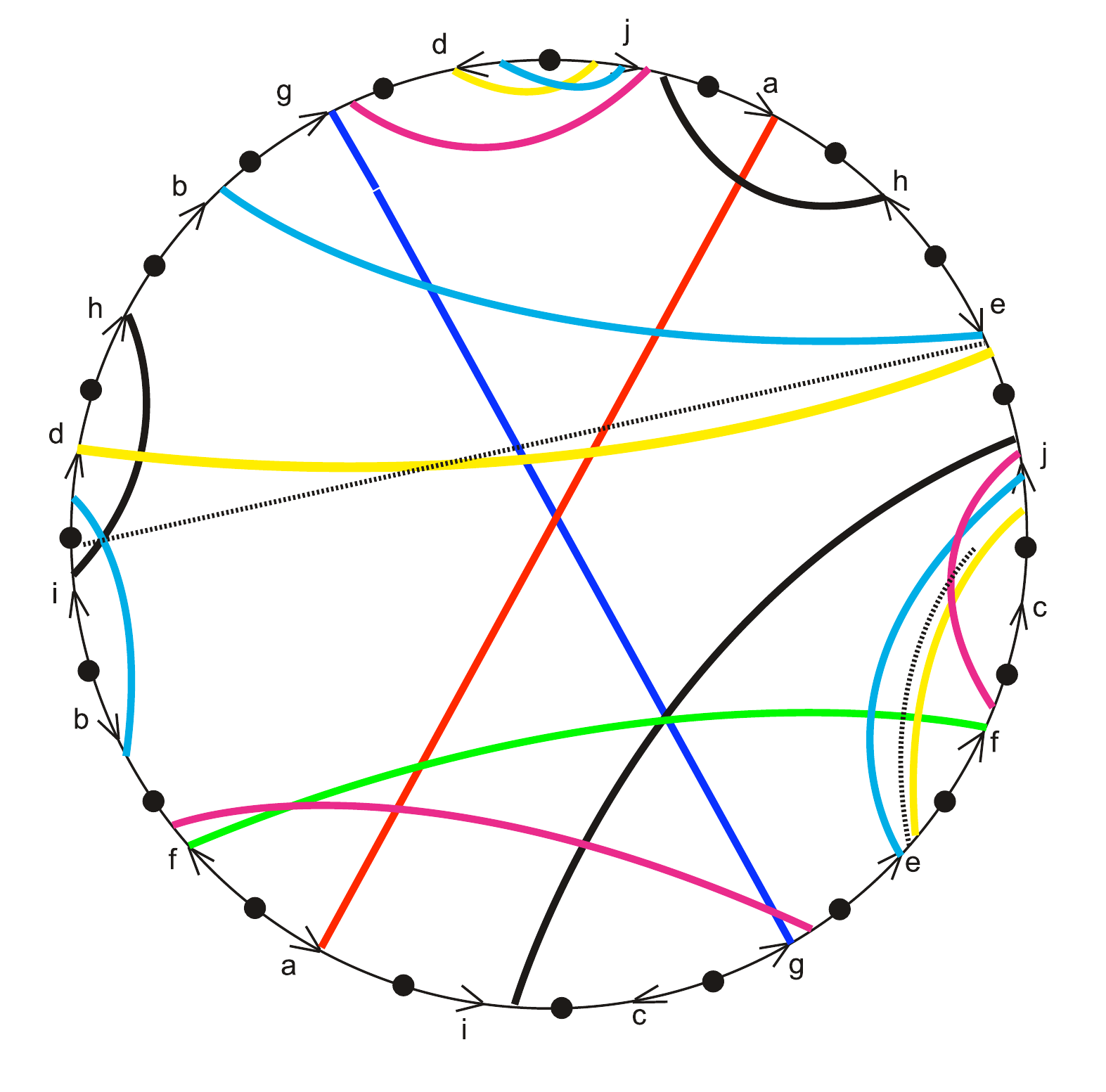}
\caption{There exists a simple arc $\gamma(3)$ (pictured above as a dashed line) transversely intersecting each curve in $\Omega(3)$ exactly once.}
\end{figure}

Then we can cut out two open disks $D_{1},D_{2}$ with disjoint closures, such that the boundary of each contains one endpoint of $\gamma(3)$ and such that the closure of each is disjoint from $\Omega(3)$. Take two parallel copies $\gamma_{1}(3),\gamma_{2}(3)$ of $\gamma(3)$, and glue an annulus $A$ to $S_{3}\setminus(D_{1}\cup D_{2})$ along its two boundary components. By `parallel', we mean an arc isotopic to $\gamma(3)$ via an isotopy which keeps the endpoints on $\partial D_{1}$ and $\partial D_{2}$. Let $\kappa_{1}$ be a proper simple arc on $A$ (proper meaning that it has one endpoint on each boundary component of $A$) intersecting the core curve of $A$ exactly once, and let $\kappa_{2}$ be the image of $\kappa_{1}$ under a Dehn twist around the core curve of $A$. Then the gluing of $A$ to $S_{3}\setminus (D_{1}\cup D_{2})$ can be done in such a way so that the endpoints of $\kappa_{j}$ are identified with the endpoints of $\gamma_{j}$, $j=1,2$. 
 
Denote by $\psi_{j}(3)$ the simple closed curve comprised of the concatenation of $\kappa_{j}$ with $\gamma_{j}$. Then $\psi_{j}(3)$ intersects each member of $\Omega(3)$ exactly once, and by construction $\psi_{1}(3)$ and $\psi_{2}(3)$ intersect exactly once. Therefore $\left\{\psi_{1}(3),\psi_{2}(3),\Omega(3)\right\}=\Omega(4)$ is a maximal complete $1$-system on $S_{4}$. No curves on $S_{3}$ become isotopic as a result of removing $D_{1}$ and $D_{2}$ and gluing on $A$; therefore the three curves of Figure $4$ still generate a rank $4$ subgroup of $\pi_{1}(S_{4})$, so the intersections of $\Omega(4)$ cannot be taken to all coincide. Thus $\Omega(4)$ is in a distinct mapping class group orbit from $X(4)$. 

This process can be repeated to construct $\Omega(5)$ which will be distinct from $X(5)$, because again we can find an arc $\gamma(4)$ which transversely intersects every member of $\Omega(4)$ exactly once. To see this, simply consider a simple arc $\theta(4)$ on $A$ with one endpoint on one of the boundary components of $A$, and the other endpoint being the intersection point of $\psi_{1}(3)$ and $\psi_{2}(3)$. We concatenate this arc with a parallel copy of $\gamma(3)$ to obtain $\gamma(4)$, and then we proceed as before- gluing on another annulus to obtain $S_{5}$ and $\Omega(5)$. 

We proceed inductively; assuming that there exists $\gamma(n-1)$, a simple arc transversely intersecting each member of $\Omega(n-1)$ exactly once, by simply concatenating $\gamma(n-1)$ with $\theta(n-1)$ we obtain $\gamma(n)$, a simple arc transversely intersecting each member of $\Omega(n)$. This then allows us to construct $\Omega(n+1)$, which is in a distinct mapping class group orbit from that of $X(n+1)$. This concludes the proof of Theorem 1.2.

\end{document}